\newtheorem{theorem}{Theorem}[section]
\newtheorem{example}[theorem]{Example}
\newtheorem{lemma}[theorem]{Lemma}
\newtheorem{proposition}[theorem]{Proposition}
\newenvironment{proof}[1][Proof]{\textbf{#1.} }{\ \rule{0.5em}{0.5em}}
\newcommand{\dN}{{\bf N}}
\newcommand{\dR}{{\bf R}}
\newcommand{\calT}{{\cal T}}
\newcommand{\ep}{\varepsilon}
\newcounter{figurecounter}
\begin{document}

\title{Browder's Theorem through Brouwer's Fixed Point Theorem%
\thanks{The first author acknowledges the support of the Israel Science Foundation, Grant \#217/17.}}

\author{Eilon Solan and Omri N.~Solan%
\thanks{The School of Mathematical Sciences, Tel Aviv
University, Tel Aviv 6997800, Israel. e-mail: eilons@post.tau.ac.il, omrisola@post.tau.ac.il.}}

\maketitle

\begin{abstract}
One of the conclusions of Browder (1960) is a parametric version of Brouwer's Fixed Point Theorem,
stating that for every continuous function $f : ([0,1] \times X) \to X$,
where $X$ is a simplex in a Euclidean space,
the set of fixed points of $f$, namely, the set $\{(t,x) \in [0,1] \times X \colon f(t,x) = x\}$,
has a connected component whose projection on the first coordinate is $[0,1]$.
Browder's (1960) proof relies on the theory of the fixed point index.
We provide an alternative proof to Browder's result using Brouwer's Fixed Point Theorem.
\end{abstract}

\noindent
Keywords: Browder's Theorem, fixed points, connected component.

\bigskip

\noindent
MSC2010: 55M20.

\section{Introduction}

Brouwer's Fixed Point Theorem (Hadamard, 1910, Brouwer, 1911) states that every continuous function from a finite dimensional simplex into itself
has a fixed point.
This result was later generalized to nonempty, convex, and compact subsets of more general topological vector spaces, see, e.g.,
Schauder (1930), Tychonoff (1935), and Dyer (1956).

The following parametric version of Brouwer's Fixed Point Theorem
is a special case of a more general result of
Browder (1960).
To state the theorem we need the concept of connected component.
A set $A \subseteq \dR^n$ is \emph{connected} if there are no two disjoint open sets $O_1,O_2$ that satisfy (a) $A \subseteq O_1 \cup O_2$,
(b) $A \not\subseteq O_1$, and
(c) $A \not\subseteq O_2$.
A subset $B$ of $A$ is a \emph{connected component} of $A$ if every connected subset of $A$ is either contained in $B$ or disjoint of $B$.

\begin{theorem}[Browder, 1960]
\label{theorem:browder}
Let $f : ([0,1] \times X) \to X$ be a continuous function,
where $X = [0,1]^n$.
Define
the set of fixed points of $f$ by
\begin{equation}
\label{equ:98}
C_f := \{ (t,x) \in [0,1] \times X \colon f(t,x)=x\}.
\end{equation}
Then $C_f$ has a connected component whose projection to the first coordinate is $[0,1]$.
\end{theorem}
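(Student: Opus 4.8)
The plan is to argue by contradiction and reduce everything to Brouwer's Fixed Point Theorem applied to the cube $[0,1]^{n+1}=[0,1]\times X$. First I would record two elementary facts. The set $C_f$ is compact, being the zero set of the continuous map $(t,x)\mapsto f(t,x)-x$ on the compact set $[0,1]\times X$; and for every $t$ the slice $C_f\cap(\{t\}\times X)$ is nonempty, since $x\mapsto f(t,x)$ is a continuous self-map of the cube $X$, to which Brouwer's theorem applies. Since the projection onto the first coordinate maps connected sets to subintervals of $[0,1]$, a connected component of $C_f$ projects onto all of $[0,1]$ if and only if it meets both $P_0:=C_f\cap(\{0\}\times X)$ and $P_1:=C_f\cap(\{1\}\times X)$; these two sets are nonempty, compact, and disjoint. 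Thus it suffices to show that some connected component of $C_f$ meets both $P_0$ and $P_1$.

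Suppose, toward a contradiction, that no component does. Here I would invoke the standard point-set fact that in a compact Hausdorff space the connected component of a point equals its quasi-component, i.e.\ the intersection of all relatively clopen sets containing it. Combined with compactness of $P_1$, this yields: for each $p\in P_0$, the component of $p$ misses $P_1$, so finitely many relatively clopen sets containing $p$ already have intersection disjoint from $P_1$; their intersection is a relatively clopen neighbourhood $V_p$ of $p$ disjoint from $P_1$. Finitely many $V_p$ cover $P_0$; their union $A_0$ is relatively clopen (hence compact), contains $P_0$, and is disjoint from $P_1$. Setting $A_1:=C_f\setminus A_0$ gives a partition $C_f=A_0\sqcup A_1$ into disjoint compact sets with $P_0\subseteq A_0$, $P_1\subseteq A_1$. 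The decisive consequence, used next, is that $A_0$ contains no point with first coordinate $1$ (such a point would lie in $P_1\subseteq A_1$), and symmetrically $A_1$ contains no point with first coordinate $0$.

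Now I would construct a fixed-point-free self-map of the cube. Since $A_0$ and $A_1$ are disjoint closed subsets of the metric space $[0,1]\times X$, a standard distance-function (Urysohn) construction gives a continuous $\phi:[0,1]\times X\to[0,1]$ with $\phi\equiv 0$ on $A_0$ and $\phi\equiv 1$ on $A_1$. Define
\[
g:[0,1]\times X\to[0,1]\times X,\qquad g(t,x):=\bigl(\,1-\phi(t,x),\ f(t,x)\,\bigr),
\]
a continuous self-map of $[0,1]^{n+1}$. If $(t^*,x^*)$ were a fixed point, then $f(t^*,x^*)=x^*$, so $(t^*,x^*)\in C_f=A_0\sqcup A_1$; if it lies in $A_0$ then $t^*=1-\phi(t^*,x^*)=1$, contradicting that $A_0$ avoids $\{t=1\}$, and if it lies in $A_1$ then $t^*=1-1=0$, contradicting that $A_1$ avoids $\{t=0\}$. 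Hence $g$ has no fixed point, contradicting Brouwer's Fixed Point Theorem (the cube being homeomorphic to a simplex). This contradiction establishes the theorem.

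As for where the difficulty lies: the construction of $g$ is short, and the ``clamping'' of the first coordinate is automatic once one knows that $A_0$ avoids $\{t=1\}$ and $A_1$ avoids $\{t=0\}$. The substantive step is the separation argument, namely turning the hypothesis ``no component of $C_f$ projects onto $[0,1]$'' into an \emph{honest partition} $C_f=A_0\sqcup A_1$ into relatively clopen sets separating $P_0$ from $P_1$ — not merely a separation by two non-clopen pieces. This rests on the component$=$quasi-component phenomenon in compact spaces together with a compactness argument, so I expect the careful verification (or precise citation) of that point-set topological ingredient to be the main obstacle.
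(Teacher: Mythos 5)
Your proof is correct, but it takes a genuinely different route from the paper at the crucial step. Both arguments share the same overall architecture: assume no component of $C_f$ joins $\{0\}\times X$ to $\{1\}\times X$, separate $C_f$ into two pieces accordingly, build a fixed-point-free self-map of the cube whose first coordinate is driven by a Urysohn-type function, and contradict Brouwer. The difference is in how the separation is obtained. You invoke the standard point-set theorem that in a compact Hausdorff space components coincide with quasi-components, and then run two compactness (finite-intersection) arguments to produce an honest relatively clopen partition $C_f=A_0\sqcup A_1$ with $P_0\subseteq A_0$, $P_1\subseteq A_1$. The paper deliberately avoids citing that theorem: it approximates $C_f$ by finite unions of dyadic boxes $S_k$, shows that if every $S_k$ had a component projecting onto $[0,1]$ then a limiting argument (via minimal sub-unions $D_k$ and their intersection $D_*$) would force such a component of $C_f$ to exist, and otherwise reads off the two separating open sets directly from the finitely many boxes. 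In effect the paper reproves, by hand and in elementary language, exactly the component-versus-quasi-component phenomenon you cite; its payoff is a self-contained argument teachable alongside a proof of Brouwer's theorem, at the cost of length. Your version is shorter and cleaner, and your clopen partition even yields a slightly simpler fixed-point-free map, $\bigl(1-\phi(t,x),f(t,x)\bigr)$, with no need for the paper's small $\ep$-perturbation $t+\ep g(t,x)$ and the attendant check that it stays in $[0,1]$; the cost is that the substantive content is outsourced to the quasi-component theorem (Kuratowski/Engelking), which, as you rightly flag, is the ingredient one must verify or cite carefully. One small point to keep in mind if you use the explicit distance-quotient formula for $\phi$: it requires $A_0,A_1\neq\emptyset$, which your Brouwer-on-slices observation that $P_0,P_1\neq\emptyset$ already guarantees.
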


\begin{example}
\label{example:1}
Let $X = [-1,1]$, and $f : [0,1] \times X \to X$ be given by
\[ f(t,x) := \left\{
\begin{array}{lll}
x & \ \ \ \ \ & t=0,\\
(1-t)x + t\sin(\tfrac{1}{t}) & & t \neq 0.
\end{array}
\right.
\]
The set $C_f$ is the union of $\{ (0,x) \colon x \in [-1,1]\}$ and
$\{ (t, \sin(\tfrac{1}{t})) \colon t \in (0,1]\}$, which is connected (but not path connected).
\end{example}

Theorem~\ref{theorem:browder} was used in a variety of topics,
like nonlinear complementarity theory (see, e.g., Eaves, 1971, or Allgower and Georg, 2012),
nonlinear elliptic boundary value problems
(Shaw, 1977),
the study of global continua of solutions of nonlinear
partial differential equations (see, e.g., Costa and Gon\c{c}alves, 1981, or Massabo and Pejsachowitz, 1984),
theoretical economics (Citanna et al., 2001),
and game theory (see, e.g., Herings and Peeters, 2010, or Solan and Solan, 2021).

Browder's (1960) proof of Theorem~\ref{theorem:browder} uses the fixed point index,%
\footnote{In fact, the statement of Browder's (1960) more general version of Theorem~\ref{theorem:browder}
is phrased using the fixed point index.}
and hence is not accessible to many researchers,
and cannot be taught in an undergraduate course in topology.
In this paper we prove Theorem~\ref{theorem:browder} using Brouwer's Fixed Point Theorem.
In particular, our proof is accessible to all mathematicians,
and can be taught in any course in which Brouwer's Fixed Point Theorem
is proved.
In Section~\ref{section:discussion} we discuss extensions of Theorem~\ref{theorem:browder}
to more general parameter sets and more general sets $X$.

Theorem~\ref{theorem:browder} easily follows from Brouwer's Fixed Point Theorem when the number of connected components
of $C_f$ is finite.%
\footnote{In this case, our Proposition~\ref{prop:1} is trivial,
hence the proof reduces to our Proposition~\ref{prop:2}.}
As the next example shows,
the number of connected components of $C_f$ may not be finite or countable.

\begin{example}
Recall that the \emph{Cantor set} $K$ is the set of all real numbers in $[0,1]$ such that,
in their representation in base 3, appear only the digits 0 and 2.
The cardinality of the Cantor set is the continuum,
and its complement is a union of countably many open intervals.
Let $g : [0,1] \to [0,1]$ be the function that is the identity on $K$,
and, on each maximal open subinterval $(a,b)$ of $[0,1]$ in the complement of $K$ it is given by
$g(x) = x + (x-a)(b-x)$, see Figure~\arabic{figurecounter}.
The function $g$ is continuous, its range is $[0,1]$, and its set of fixed points is $K$.

Define now a function $f : [0,1] \times [0,1] \to [0,1]$ by
$f(t,x) = g(x)$ for every $(t,x) \in [0,1] \times [0,1]$.
The connected components of $C_f$ are then all sets of the form $[0,1] \times \{x\}$ for $x \in K$.
\label{example:2}
\end{example}

\bigskip
\centerline{ \includegraphics{browder_fig.1} }

\vspace{0.2truecm}

\centerline{{Figure
\arabic{figurecounter}:
The function $g$ in Example~\ref{example:2}.}}
\addtocounter{figurecounter}{1}

\section{Proof of Theorem~\ref{theorem:browder}}

In this section we prove Theorem~\ref{theorem:browder}.
The theorem will follow from Brouwer's Fixed Point Theorem once we prove the following two results.

\begin{proposition}
\label{prop:1}
If $C_f$ has no connected component whose projection on the first coordinate is $[0,1]$,
then there are
two disjoint open sets $O_0$ and $O_1$ that satisfy the following properties:
\begin{enumerate}
\item[(C1)]
The sets $O_0$ and $O_1$ cover $C_f$, that is, $C_f \subseteq O_0 \cap O_1$.
\item[(C2)]
Every connected component $B$ of $C_f$ that satisfies $B \cap (\{0\} \times X) \neq \emptyset$ is a subset of $O_0$.
\item[(C3)]
Every connected component $B$ of $C_f$ that satisfies $B \cap (\{1\} \times X) \neq \emptyset$ is a subset of $O_1$.
\end{enumerate}
\end{proposition}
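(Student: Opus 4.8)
The plan is to reduce Proposition~\ref{prop:1} to a classical separation property of compact spaces. First I would record that $C_f$ is compact: writing $X=[0,1]^n\subseteq\dR^n$, the map $(t,x)\mapsto f(t,x)-x$ is continuous on the compact set $[0,1]\times X$, and $C_f$ is its zero set, hence closed and compact. Set $A:=C_f\cap(\{0\}\times X)$ and $B:=C_f\cap(\{1\}\times X)$; these are disjoint (they lie in the hyperplanes with first coordinate $0$ and $1$) and closed in $C_f$. The crucial observation is that \emph{no connected subset of $C_f$ meets both $A$ and $B$}: if a connected set $S\subseteq C_f$ contained points $(0,x)$ and $(1,y)$, then its image under the projection $(t,x)\mapsto t$ would be a connected subset of $[0,1]$ containing $0$ and $1$, hence all of $[0,1]$; taking $S$ to be a connected component, this contradicts the hypothesis of the proposition. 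In particular $A$ and $B$ are not joined by any connected component of $C_f$.

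Next I would invoke the following standard fact from point-set topology: \emph{if $K$ is a compact metric space and $A,B\subseteq K$ are disjoint closed sets such that no connected subset of $K$ meets both $A$ and $B$, then there is a partition $K=K_0\cup K_1$ into two disjoint sets, each relatively clopen in $K$, with $A\subseteq K_0$ and $B\subseteq K_1$.} This is where the real work lies. It rests on the theorem that in a compact Hausdorff space the connected component of a point coincides with its quasi-component (the intersection of all clopen sets containing it), proved by a finite-subcover argument: for each $a\in A$ the component of $a$ is disjoint from the compact set $B$, so already some clopen neighbourhood $D_a$ of $a$ is disjoint from $B$, and finitely many such $D_a$ cover the compact set $A$; their union is the desired $K_0$. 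I would either cite this (e.g.\ Engelking, \emph{General Topology}, or Hurewicz--Wallman, \emph{Dimension Theory}) or include the short self-contained argument, since the paper aims to stay elementary; either way this separation lemma is the main obstacle and the only ingredient beyond elementary compactness.

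With the partition in hand, observe that $K_0$ and $K_1$, being closed in $C_f$ and hence in $[0,1]\times X$, are disjoint compact subsets of $\dR^{n+1}$. By normality of $\dR^{n+1}$ (equivalently, since two disjoint nonempty compact sets in a metric space are at positive distance, so their small equal-radius neighbourhoods are disjoint) choose disjoint open sets $O_0\supseteq K_0$ and $O_1\supseteq K_1$; if one of $K_0,K_1$ is empty, take the corresponding $O_i$ empty.

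Finally I would verify (C1)--(C3). For (C1): $C_f=K_0\cup K_1\subseteq O_0\cup O_1$. For (C2): let $B'$ be a connected component of $C_f$ with $B'\cap(\{0\}\times X)\neq\emptyset$. Since $B'\subseteq C_f$, we have $B'\cap(\{0\}\times X)=B'\cap A$, so $B'\cap K_0\supseteq B'\cap A\neq\emptyset$. As $B'$ is connected and $\{K_0,K_1\}$ is a partition of $C_f$ into relatively clopen sets, $B'$ lies entirely in $K_0$ or entirely in $K_1$; since it meets $K_0$ we get $B'\subseteq K_0\subseteq O_0$. The argument for (C3) is identical with the roles of $0,A,K_0,O_0$ and $1,B,K_1,O_1$ interchanged. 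This completes the proof modulo the separation lemma.
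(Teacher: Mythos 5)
Your argument is correct, but it takes a genuinely different route from the paper. You reduce the proposition to the classical separation theorem for compact spaces: components coincide with quasi-components in a compact Hausdorff space, hence two disjoint closed sets $A=C_f\cap(\{0\}\times X)$ and $B=C_f\cap(\{1\}\times X)$ that are joined by no connected subset of $C_f$ can be separated by a relatively clopen partition $K_0\cup K_1$ of $C_f$, which you then fatten into disjoint open sets of $\dR^{n+1}$; your verification of (C1)--(C3) from the clopen partition is sound (note that (C1) in the statement contains a typo, $O_0\cap O_1$ should read $O_0\cup O_1$, and you prove the intended union version, which is what the proof of Proposition~\ref{prop:2} actually uses). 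The paper instead proves exactly the special case of that separation fact it needs, from scratch: it covers $[0,1]\times X$ by dyadic boxes of mesh $2^{-k}$, lets $S_k$ be the union of boxes meeting $C_f$, observes that if some $S_k$ has no component projecting onto $[0,1]$ then the finitely many boxes yield the open sets $O_0,O_1$ directly, and otherwise extracts a minimal decreasing sequence $D_k$ of box unions whose intersection $D_*$ is shown to be connected, contained in $C_f$, and to project onto $[0,1]$, contradicting the hypothesis. What your approach buys is brevity and generality (it works verbatim whenever $C_f$ is compact metric, with no box combinatorics), at the cost of importing a nontrivial point-set theorem (component equals quasi-component), which you would have to either cite (it is indeed in Kuratowski or Engelking; Kuratowski's Topology~II even appears in the paper's bibliography) or prove; what the paper's construction buys is precisely the self-containedness it advertises, since the $S_k$/$D_k$ argument is an elementary in-line substitute for that lemma, using only compactness and the finiteness of the box cover. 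If you include the short proof of the quasi-component theorem, as you propose, your route is a complete and legitimate alternative proof.
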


\begin{proposition}
\label{prop:2}
If there are
two disjoint open sets $O_0$ and $O_1$ that satisfy (C1)--(C3),
then there is a continuous function $F : ([0,1] \times X) \to ([0,1] \times X)$
that has no fixed point.
\end{proposition}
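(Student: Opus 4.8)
The plan is to leave the $X$-coordinate of $F$ equal to $f$ and to perturb only the parameter coordinate. Since a fixed point of the resulting map is automatically a fixed point of $f$ in the last coordinate, it must lie in $C_f$; so it suffices to shift the parameter near $C_f$ in a direction that is incompatible with lying in $C_f$. The direction of the shift is read off from $O_0$ and $O_1$.

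Concretely, I would first partition $C_f$ as $C_f = C_0 \sqcup C_1$ with $C_0 := C_f \cap O_0$ and $C_1 := C_f \cap O_1$; this is a genuine partition by (C1) and $O_0 \cap O_1 = \emptyset$. Then $C_0 = C_f \setminus O_1$ and $C_1 = C_f \setminus O_0$, so both are compact. Applying Brouwer's Fixed Point Theorem to the slices $x \mapsto f(0,x)$ and $x \mapsto f(1,x)$, the set $C_f$ meets both $\{0\}\times X$ and $\{1\}\times X$, and the connected components of $C_f$ through these points lie in $O_0$ and $O_1$ by (C2)--(C3); in particular $C_0$ and $C_1$ are nonempty.

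The step that carries the content of (C2)--(C3) is the claim that
\[
C_0 \cap (\{1\}\times X) = \emptyset \qquad\text{and}\qquad C_1 \cap (\{0\}\times X) = \emptyset .
\]
If $(1,x) \in C_0$, its connected component $B$ in $C_f$ meets $\{1\}\times X$, hence $B \subseteq O_1$ by (C3), hence $(1,x) \in O_0 \cap O_1 = \emptyset$, a contradiction; the second identity is symmetric. Now I would take the continuous function $\tau : [0,1]\times X \to [0,1]$ given by $\tau(z) := d(z,C_1)/\big(d(z,C_0)+d(z,C_1)\big)$, where $d$ denotes Euclidean distance to a set; it is well defined and continuous because $C_0$ and $C_1$ are disjoint nonempty closed sets, and it satisfies $\tau \equiv 1$ on $C_0$ and $\tau \equiv 0$ on $C_1$. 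Define
\[
F(t,x) := \big(\tau(t,x),\, f(t,x)\big),
\]
a continuous map from $[0,1]\times X$ into itself.

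Finally I would verify that $F$ has no fixed point. If $F(t,x)=(t,x)$ then $f(t,x)=x$, so $(t,x)\in C_0 \cup C_1$, and also $\tau(t,x)=t$. If $(t,x)\in C_0$ then $t=\tau(t,x)=1$, contradicting the first identity above; if $(t,x)\in C_1$ then $t=\tau(t,x)=0$, contradicting the second. Hence no fixed point exists. The only place that needs genuine care is the displayed claim that $C_0$ avoids the top face and $C_1$ avoids the bottom face; everything after it is a routine Urysohn-type construction followed by a two-case check, so I expect that claim to be the crux of the argument.
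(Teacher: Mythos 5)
Your proof is correct, and it is a genuine variant of the paper's argument rather than a reproduction of it. Both proofs share the same skeleton: keep $f$ in the $X$-coordinate, so that any fixed point of $F$ must lie in $C_f$, and then use an Urysohn-type function built from distances to the two closed pieces $C_f \cap O_0$ and $C_f \cap O_1$ to make the first coordinate move on $C_f$. The difference is in how the first coordinate is handled. The paper perturbs $t$ additively, setting $F(t,x) = (t + \ep g(t,x), f(t,x))$ where $g \equiv 1$ on $(\{0\}\times X) \cup (C_f \cap O_0)$ and $g \equiv -1$ on $(\{1\}\times X) \cup (C_f \cap O_1)$; there (C2)--(C3) are used to show these two enlarged closed sets are disjoint, and an extra uniform-continuity argument is needed to choose $\ep$ small enough that $t + \ep g(t,x)$ stays in $[0,1]$. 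You instead replace $t$ wholesale by the Urysohn value $\tau(t,x) \in [0,1]$, so the range condition is automatic and no $\ep$-argument is needed; (C2)--(C3) enter only through your displayed claim that $C_0$ misses $\{1\}\times X$ and $C_1$ misses $\{0\}\times X$, which you prove correctly and which plays the role that disjointness of the enlarged sets plays in the paper. The price of your construction is that the formula for $\tau$ requires $C_0$ and $C_1$ to be nonempty, which you obtain by an extra appeal to Brouwer on the slices $t=0$ and $t=1$ together with (C2)--(C3); this is logically harmless in the paper's scheme (Brouwer is the assumed tool throughout), but note it could be avoided altogether by observing that if, say, $C_0 = \emptyset$ one may take $\tau \equiv 0$ (and $\tau \equiv 1$ if $C_1 = \emptyset$), which would keep the proof of the proposition itself free of Brouwer, as the paper's version is.
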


To see that the two propositions imply Theorem~\ref{theorem:browder},
note that the conclusion of Proposition~\ref{prop:2} contradicts Brouwer's Fixed Point Theorem,
hence Proposition~\ref{prop:2} implies that there are no
two disjoint open sets $O_0$ and $O_1$ that satisfy (C1)--(C3).
Hence Proposition~\ref{prop:1} implies that $C_f$ has a connected component whose projection on the first coordinate is $[0,1]$.

As Example~\ref{example:1} shows, connected components of $C_f$ may be complicated sets,
and as Example~\ref{example:2} shows, the number of connected components of $C_f$ may be of the order of the continuum.
In particular, the condition that
$C_f$ has no connected component whose projection on the first coordinate is $[0,1]$
is difficult to use.
Proposition~\ref{prop:1} turns the contrapositive assumption of Theorem~\ref{theorem:browder}
into a seemingly stronger condition that is easier to use.

The proof of Proposotion~\ref{prop:1} is the more challenging part of the
proof of Theorem~\ref{theorem:browder},
and it goes through the following steps.
\begin{itemize}
	\item
	For every $k \in \dN$ we will approximate $C_f$ by a ``simple'' set $S_k$.
	We will do that by covering $[0,1] \times X$ with finitely many boxes of diameter $\frac{1}{2^k}$, and defining $S_k$ to be the union of all boxes that intersect $C_f$.
	\item
	We will then prove that if $C_f$ has no connected component whose projection on the first coordinate is $[0,1]$,
	then there is $k$ such that $S_k$ has no connected component whose projection on the first coordinate is $[0,1]$.
	\item
	Since $S_k$ is the union of finitely many boxes,
	if it has no connected component whose projection on the first coordinate is $[0,1]$,
	then the existence of two disjoint open sets $O_0$ and $O_1$ that satisfy (C1)--(C3) w.r.t.~$S_k$ (rather than $C_f$) is clear. 
	Since $S_k \supseteq C_f$, Proposition~\ref{prop:1} follows.
\end{itemize}

\subsection{Proof of Proposition~\ref{prop:1}}

The maximum norm in $\dR^n$ is given by
\[ d_\infty(y,y') := \max_{i=1,\dots,n}|y_i-y'_i|, \ \ \ \forall y,y' \in \dR^n. \]
In the proof we will use the distance between a point and a set and the distance between two sets:
for every two sets $A,A' \subseteq \dR^n$ and every point $y \in \dR^n$,
\[ d_\infty(y,A) := \inf_{y' \in A} d_\infty(y,y'),
\ \ \ \ \
d_\infty(A,A') := \inf_{y \in A, y' \in A'} d_\infty(y,y'). \]
We will also use the Hausdorff distance between sets:
\[ d_H(A,A') := \max\left\{ \sup_{y \in A} d_\infty(y,A'), \sup_{y' \in A'} d_\infty(y',A)\right\}. \]


For every $k \in \dN$, let $\calT_k$ be the collection of all
boxes $\prod_{i=1}^{n+1} [a_i,b_i] \subseteq [0,1]^{n+1}$ where $a_i$ and $b_i$ are rational numbers that are integer multiples of $\frac{1}{2^k}$.
We note that $\calT_{k+1}$ refines $\calT_k$:
every set $T \in \calT_{k+1}$ is a subset of some set $T' \in \calT_k$.
Let
\[ S_k := \bigcup\{ T \in \calT_k \colon T \cap C_f \neq \emptyset\}. \]
This is the union of all boxes in $\calT_k$ that contain points in $C_f$, see Figure~\arabic{figurecounter}, where the set $C_f$ has three connected components.
In particular, $S_k \supseteq C_f$.
Since $\calT_{k+1}$ refines $\calT_k$, we have $S_{k+1} \subseteq S_k$.

\bigskip
\centerline{ \includegraphics{browder_fig.6} }

\vspace{0.2truecm}

\centerline{{Figure
\arabic{figurecounter}:
The boxes in $\calT_k$ and the sets $C_f$ (dark) and $S_k$ (grey).}}
\addtocounter{figurecounter}{1}

\begin{lemma}
If there is $k \in \dN$ such that $S_k$ has no connected component whose projection on
the first coordinate is $[0,1]$,
then there are two disjoint open sets $O_0$ and $O_1$
that satisfy (C1)--(C3).
\end{lemma}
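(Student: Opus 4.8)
The plan is to use the fact that $S_k$, being a union of finitely many closed axis-parallel boxes, is far more rigid than $C_f$: since $\calT_k$ is finite, $S_k$ is the union of the finitely many boxes $T \in \calT_k$ with $T \cap C_f \neq \emptyset$. A finite union of connected sets has only finitely many connected components, and each component $D$ equals the union of exactly those boxes that meet it: if a box $T$ meets $D$, then $T \cup D$ is connected, so $T \subseteq D$ by maximality of $D$. Hence the connected components $D_1,\dots,D_m$ of $S_k$ are finite in number, pairwise disjoint, and each is closed, thus compact; and being finitely many closed sets partitioning $S_k$, each $D_j$ is also open in $S_k$. In particular every connected subset of $S_k$ --- so in particular every connected component of the smaller set $C_f$ --- is contained in a single $D_j$.

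Next I would separate the components and use the hypothesis to sort them. The numbers $d_\infty(D_i,D_j)$ for $i \neq j$ are finitely many and strictly positive (disjoint compacta), so $\delta := \min_{i \neq j} d_\infty(D_i,D_j) > 0$; set $U_j := \{ y : d_\infty(y,D_j) < \delta/2\}$, which are open, contain $D_j$, and are pairwise disjoint by the triangle inequality. Now the hypothesis enters: for each $j$ the image of $D_j$ under the projection on the first coordinate is a connected subset of $[0,1]$, i.e.\ a subinterval, and it is not equal to $[0,1]$; a proper subinterval of $[0,1]$ contains at most one of $0$ and $1$, so $D_j$ meets at most one of $\{0\}\times X$ and $\{1\}\times X$. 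Let $J_0 := \{ j : D_j \cap (\{0\}\times X) \neq \emptyset\}$, and define $O_0 := \bigcup_{j \in J_0} U_j$ and $O_1 := \bigcup_{j \notin J_0} U_j$; these are open, and disjoint because the $U_j$ are pairwise disjoint.

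Finally I would verify (C1)--(C3). Property (C1) is immediate: $C_f \subseteq S_k = \bigcup_j D_j \subseteq \bigcup_j U_j = O_0 \cup O_1$. For (C2), a connected component $B$ of $C_f$ with $B \cap (\{0\}\times X) \neq \emptyset$ is connected, hence $B \subseteq D_j$ for a single $j$; then $D_j$ meets $\{0\}\times X$, so $j \in J_0$ and $B \subseteq D_j \subseteq U_j \subseteq O_0$. For (C3), similarly $B \subseteq D_j$ for a single $j$ and $D_j$ meets $\{1\}\times X$, so by the dichotomy above $D_j$ does not meet $\{0\}\times X$, whence $j \notin J_0$ and $B \subseteq D_j \subseteq U_j \subseteq O_1$. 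I do not expect a real obstacle here: passing from $C_f$ to $S_k$ is precisely what makes the topology routine, and the only points needing care are that $S_k$ has finitely many components that are clopen in it (so that a connected piece of $C_f$ cannot straddle two of them) and that finitely many pairwise disjoint compacta admit pairwise disjoint open neighborhoods.
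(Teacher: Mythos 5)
Your proof is correct and follows essentially the same route as the paper: decompose $S_k$ into its finitely many connected components, use the hypothesis to conclude that no component meets both $\{0\}\times X$ and $\{1\}\times X$, sort the components accordingly, and fatten each group into disjoint open neighborhoods. The only cosmetic difference is that the paper groups the components into two sets and exploits the grid structure to get the explicit separation $2^{-k}$ (then uses $2^{-(k+1)}$-neighborhoods), whereas you invoke the generic fact that finitely many disjoint compacta have positive pairwise distance; both yield the same conclusion.
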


\begin{proof}
Let $A_0$ be the union of all connected component of $S_k$ that intersect $\{0\} \times X$.
Let $A_1 := S_k \setminus A_0$.
Each of the sets $A_0$ and $A_1$ is a union of finitely many boxes,
and the two sets are disjoint.
It follows that $d_\infty(O_0,O_1) \geq \frac{1}{2^k}$.
This implies that the open sets
\[ O_0 := \left\{ (t,x) \in Y \colon d_\infty((t,x),A_0) < \frac{1}{2^{k+1}}\right\}, \]
and
\[ O_1 := \left\{ (t,x) \in Y \colon d_\infty((t,x),A_1) < \frac{1}{2^{k+1}}\right\}, \]
satisfy (C1)--(C3).
\end{proof}

\bigskip

From now on we assume that for every $k \in \dN$, the set $S_k$ has a connected component
whose projection on the first coordinate is $[0,1]$.
We will prove that in this case,
$C_f$ has a connected component whose projection on the first coordinate is $[0,1]$.

\begin{lemma}
Suppose that for every $k \in \dN$, the set $S_k$ has a connected component
whose projection on the first coordinate is $[0,1]$.
There is a decreasing sequence of closed sets $(D_k)_{k \in \dN}$ that satisfies the following properties for every $k \in \dN$:
\begin{enumerate}
\item[(D1)] $D_k$ is a union of boxes in $\calT_k$, and in particular it is closed.
\item[(D2)] $D_k \subseteq S_k$.
\item[(D3)] If $k > 1$ then $D_k \subseteq D_{k-1}$.
\item[(D4)] For every $l \geq k$, The set $S_l \cap D_k$ has a connected component
whose projection on the first coordinate is $[0,1]$.
\item[(D5)] $D_k$ is a minimal subset of $S_k$ that satisfies (D1)--(D4).
\end{enumerate}
\end{lemma}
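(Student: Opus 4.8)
The plan is to build the sequence $(D_k)_{k\in\dN}$ recursively, defining $D_k$ at each stage as an inclusion-minimal element of a suitable finite family of candidate sets. Assume throughout that for every $k\in\dN$ the set $S_k$ has a connected component whose projection on the first coordinate is $[0,1]$. For $k\in\dN$, and with $D_{k-1}$ already chosen when $k>1$, let $\calD_k$ be the collection of all sets $D$ such that: (i) $D$ is a union of boxes in $\calT_k$; (ii) $D\subseteq S_k$; (iii) $D\subseteq D_{k-1}$ if $k>1$; and (iv) for every $l\ge k$ the set $S_l\cap D$ has a connected component whose projection on the first coordinate is $[0,1]$. Since $\calT_k$ contains only finitely many boxes, $\calD_k$ is a finite collection of sets; assuming for the moment that it is nonempty, let $D_k$ be any inclusion-minimal member of $\calD_k$. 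Then (D1)--(D4) are precisely conditions (i)--(iv), and (D5) holds because $\calD_k$ is exactly the family of subsets of $S_k$ satisfying (D1)--(D4) and $D_k$ was chosen minimal in it. Thus the whole lemma reduces to showing that $\calD_k\neq\emptyset$ for every $k$.

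For $k=1$, I claim $S_1\in\calD_1$. Conditions (i) and (ii) are clear and (iii) is vacuous, while (iv) holds because $S_l\subseteq S_1$ for all $l\ge1$ (the sequence $(S_k)$ is decreasing), so $S_l\cap S_1=S_l$, which has a connected component projecting onto $[0,1]$ by the standing assumption. For the inductive step, suppose $D_{k-1}$ satisfying (D1)--(D5) has been constructed, and consider $D:=D_{k-1}\cap S_k$. Condition (i) holds because $D_{k-1}$ is a union of boxes in $\calT_{k-1}$ and hence, the $\tfrac{1}{2^{k-1}}$-grid being a subgrid of the $\tfrac{1}{2^{k}}$-grid, a union of boxes in $\calT_k$, while $S_k$ is a union of boxes in $\calT_k$, and the intersection of two boxes in $\calT_k$ is again a (possibly empty or degenerate) box in $\calT_k$; hence $D$ is a finite union of boxes in $\calT_k$. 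Conditions (ii) and (iii) are immediate from $D\subseteq S_k$ and $D\subseteq D_{k-1}$. For (iv), fix $l\ge k$; since $S_l\subseteq S_k$ we have $S_l\cap D=S_l\cap D_{k-1}\cap S_k=S_l\cap D_{k-1}$, and property (D4) for $D_{k-1}$ applies to every $l\ge k-1$, in particular to $l\ge k$, so $S_l\cap D_{k-1}$ has a connected component projecting onto $[0,1]$. Hence $D\in\calD_k$, and the induction goes through. Finally, $(D_k)_{k\in\dN}$ is decreasing by (D3) and consists of closed sets, each being a finite union of closed boxes.

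The only genuinely non-routine point is the transfer of property (D4) across levels in the inductive step, namely the verification that $D_{k-1}\cap S_k$ still satisfies (iv); this rests entirely on the nesting $S_l\subseteq S_k$ for $l\ge k$, which collapses $S_l\cap D_{k-1}\cap S_k$ to $S_l\cap D_{k-1}$. The remaining ingredients are purely organisational: finiteness of $\calD_k$ guarantees that a minimal element exists, the refinement relation between the grids guarantees that unions and intersections of boxes stay inside the relevant classes, and conditions (i)--(iv) were set up to coincide with (D1)--(D5).
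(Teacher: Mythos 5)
Your proof is correct and takes essentially the same route as the paper: both construct $D_k$ inductively by verifying that $S_k \cap D_{k-1}$ satisfies (D1)--(D4) (using the nesting $S_l \subseteq S_k$ for $l \geq k$ to transfer (D4)) and then selecting an inclusion-minimal element of the resulting finite family of candidate sets, which yields (D5). Your spelled-out computation $S_l \cap D_{k-1} \cap S_k = S_l \cap D_{k-1}$ is simply a more explicit version of the paper's one-line justification that (D4) is inherited from $D_{k-1}$.
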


\begin{proof}
The proof is by induction over $k$.
Define $D_{-1} := [0,1] \times X$,
let $k \in \dN$ be given, and assume that we already defined $(D_j)_{j=1}^{k-1}$
in a way that satisfies (D1)--(D5).
We argue that the set $D_k := S_k \cap D_{k-1}$ satisfies (D1)--(D4).
For $k=1$ this holds by the properties of $(S_k)_{k\in \dN}$.

Assume now that $k>1$.
By definition, (D2) and (D3) hold.
Since $S_k$ is a union of boxes in $\calT_k$,
since $D_{k-1}$ is a union of boxes in $\calT_{k-1}$,
and since $\calT_k$ refines $\calT_{k-1}$, (D1) holds.
(D4) holds since $D_{k-1}$ satisfies (D4).

Since the set $S_k \cap D_{k-1}$ is composed of finitely many boxes in $\calT_k$,
it has finitely many subsets that satisfy (D1)--(D3),
and at least one of them, $S_k \cap D_{k-1}$, satisfies (D4).
Let $D_k$ be a minimal (w.r.t.~set inclusion) subset of $S_k \cap D_{k-1}$
that satisfies (D1)--(D4).
Then $D_k$ also satisfies (D5).
\end{proof}

\bigskip

Since the sequence $(D_k)_{k \in \dN}$ is a decreasong sequence of closed sets,
the intersection
$D_* := \bigcap_{k \in \dN} D_k$ is closed and nonempty.
Since the sets $(D_k)_{k \in \dN}$ are contained in the compact set $[0,1]\times X$,
we have $\lim_{k \to \infty} d_H(D_k,D_*) = 0$.

As we now show, the minimality of $D_k$ (Property (D5)) implies that $D_k$ is connected.
In fact, this implication is the reason for requiring $D_k$ to satisfy Property~(D5).

\begin{lemma}
\label{lemma:2a}
The set $D_k$ is connected, for every $k \in \dN$.
\end{lemma}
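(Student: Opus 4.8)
The plan is to argue by contradiction using the minimality property (D5). Suppose $D_k$ is not connected. Since $D_k$ is a finite union of boxes in $\calT_k$ (Property (D1)), it is a compact set, and a disconnected compact set that is a union of closed boxes splits as $D_k = E \sqcup E'$ where $E$ and $E'$ are nonempty, disjoint, each a union of boxes in $\calT_k$, and hence each closed. (Concretely, the ``box-adjacency graph'' on the boxes comprising $D_k$ — with two boxes adjacent if they share at least a point — is disconnected, and $E, E'$ are unions over two nonempty parts of a separation of this graph; disjointness as sets follows because boxes in different parts share no points.) The goal is to show that one of $E, E'$ already satisfies (D1)--(D4), contradicting the minimality of $D_k$.

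First I would record that $E$ and $E'$ automatically satisfy (D1) (each is a union of boxes in $\calT_k$), (D2) (each is a subset of $D_k \subseteq S_k$), and (D3) (each is a subset of $D_k \subseteq D_{k-1}$ when $k > 1$). So the only property at stake is (D4): for every $l \ge k$, the set $S_l \cap (\,\cdot\,)$ has a connected component whose projection to the first coordinate is $[0,1]$. The key observation is that since $D_k$ satisfies (D4), for each $l \ge k$ there is a connected set $K_l \subseteq S_l \cap D_k$ projecting onto $[0,1]$; being connected, $K_l$ cannot meet both $E$ and $E'$ (as $E, E'$ are disjoint closed sets separating $D_k \supseteq S_l \cap D_k \supseteq K_l$), so $K_l \subseteq S_l \cap E$ or $K_l \subseteq S_l \cap E'$. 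Thus for each $l$, at least one of $E, E'$ ``inherits'' the required connected component at level $l$.

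The main obstacle is that a priori the choice of which of $E$ or $E'$ works might depend on $l$, so neither alone need satisfy (D4) for all $l$. To handle this I would use the refinement structure: for $m \ge l \ge k$ we have $S_m \subseteq S_l$, hence $S_m \cap D_k \subseteq S_l \cap D_k$, and a connected component of $S_m \cap D_k$ projecting onto $[0,1]$ lies inside a connected component of $S_l \cap D_k$ with the same projection (the image of a connected set is connected, and $[0,1]$ projects onto $[0,1]$). Therefore, if $S_m \cap E$ carries such a component then so does $S_l \cap E$, i.e., the set of levels $l$ for which $E$ works is upward-closed, and likewise for $E'$. Since their union is all of $\{l : l \ge k\}$ and each is upward-closed (an ``up-set'' in $\dN$), one of the two equals all of $\{l \ge k\}$: indeed, if $E$ fails at some level $l_0$ then $E'$ must work at every level $\ge l_0$, and by upward-closedness of $E'$'s good set combined with $E$ handling all levels $< l_0$... — more cleanly, the two up-sets cover $\{l \ge k\}$, so at least one of them is not bounded below by anything larger than $k$, meaning it contains $k$ and hence (being an up-set) all of $\{l \ge k\}$. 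That single set, say $E$, then satisfies (D1)--(D4) and is a proper subset of $D_k$ (since $E' \neq \emptyset$), contradicting (D5). Hence $D_k$ is connected. I would be careful in the write-up about the elementary point that a separation of $D_k$ into two disjoint nonempty closed subsets each being a union of $\calT_k$-boxes really does exist when $D_k$ is disconnected — this is where the finite-box structure is essential and keeps everything at the level of Brouwer-accessible combinatorics.
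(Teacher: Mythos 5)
Your argument is in substance the same as the paper's: both proofs separate the disconnected $D_k$ into two pieces, use a monotonicity-plus-pigeonhole step to show that a single piece carries the required connected component at every level, and then contradict the minimality property (D5); the paper separates by disjoint open sets and runs the monotonicity over the decreasing sequence $(D_l)_{l\geq k}$, whereas you separate into closed unions of $\calT_k$-boxes and run it over $(S_l\cap D_k)_{l\geq k}$ — an equivalent bookkeeping choice, and your version is somewhat more explicit about verifying (D1)--(D3) for the surviving piece. One slip to correct in the write-up: the monotonicity you actually prove (if $S_m\cap E$ has a component projecting onto $[0,1]$ and $k\leq l\leq m$, then so does $S_l\cap E$) makes the set of good levels for $E$ \emph{downward}-closed in $\{l\geq k\}$, not upward-closed, so the ``contains $k$, hence everything'' up-set argument does not apply as stated. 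The conclusion nevertheless follows in one line: two downward-closed subsets of the infinite set $\{l\geq k\}$ covering it cannot both be bounded, and an unbounded downward-closed subset equals $\{l\geq k\}$; with that correction your proof is complete.
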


\begin{proof}
Assume by way of contradiction that $D_{k_*}$ is not connected for some $k_* \in \dN$.
Let $O'$ and $O''$ be two disjoint open sets that satisfy
(a) $D_{k_*} \subseteq O' \cup O''$,
(b) $D_{k_*} \not\subseteq O'$, and
(c) $D_{k_*} \not\subseteq O''$.

For every $k \in \dN$, every connected component of $D_k$ lies either in $D_k \cap O'$ or in $D_k \cap O''$.
Hence, and since for every $k \in \dN$,
the set $D_k$ has a connected component whose projection on the first coordinate is $[0,1]$,
at least one of the sets $D_k \cap O'$ and $D_k \cap O''$
has a connected component whose projection on the first coordinate is $[0,1]$.
Assume w.l.o.g.~that for infinitely many $k$'s,
the set $D_k \cap O'$
has a connected component whose projection on the first coordinate is $[0,1]$.

Since the sequence $(D_k)_{k \in \dN}$ is decreasing,
if $D_{k+1} \cap O'$
has a connected component whose projection on the first coordinate is $[0,1]$,
then $D_k \cap O'$
has a connected component whose projection on the first coordinate is $[0,1]$.
It follows that for every $k \in \dN$, and in particular for $k=k_*$,
the set $D_k \cap O'$
has a connected component whose projection on the first coordinate is $[0,1]$.

But then the set $D_{k_*} \cap O'$ satisfies Properties (D1)--(D4) for $k_* = k$,
contradicting the minimality of $D_{k_*}$ (Property (D5)).
\end{proof}

\begin{lemma}
\label{lemma:2}
The set $D_*$ is connected.
\end{lemma}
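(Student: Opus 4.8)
The plan is to invoke the classical fact that a nested intersection of nonempty compact connected sets is connected, and to spell out the short argument behind it via the finite intersection property. Concretely, I would argue by contradiction. Suppose $D_*$ is not connected. Then, by the definition of connectedness, there are disjoint open sets $O'$ and $O''$ with $D_* \subseteq O' \cup O''$, $D_* \not\subseteq O'$, and $D_* \not\subseteq O''$; since $D_*$ is nonempty, both $D_* \cap O'$ and $D_* \cap O''$ are nonempty.

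First I would observe that $K := ([0,1] \times X) \setminus (O' \cup O'')$ is a closed subset of the compact set $[0,1] \times X$, hence compact, and that each $D_k$ is compact by Property (D1). The sets $(D_k \cap K)_{k \in \dN}$ then form a decreasing sequence of compact sets whose intersection equals $D_* \cap K$, which is empty because $D_* \subseteq O' \cup O''$. By the finite intersection property there is some $k \in \dN$ with $D_k \cap K = \emptyset$, that is, $D_k \subseteq O' \cup O''$. Now $D_k$ is connected by Lemma~\ref{lemma:2a}, and $O'$, $O''$ are disjoint open sets covering it, so $D_k$ is contained in one of them, say $D_k \subseteq O'$. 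Since the sequence $(D_k)_{k \in \dN}$ is decreasing, $D_* \subseteq D_k \subseteq O'$, contradicting $D_* \not\subseteq O'$. This proves the lemma.

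I do not anticipate a genuine obstacle: all the work is already packaged in Property (D1) (each $D_k$ is a finite union of closed boxes, hence compact), in the nonemptiness of $D_*$ (noted just before the statement), and in Lemma~\ref{lemma:2a} (each $D_k$ is connected). The only point requiring a moment's care is to take the separating pair $O', O''$ open in the ambient space so that $K$ is closed and the finite intersection property applies; this matches the setting of the definition of connectedness used in the paper.
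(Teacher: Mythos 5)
Your proof is correct, and it differs from the paper's argument in the key technical step. The paper also proceeds by contradiction from a separation $O',O''$ of $D_*$ and also relies on Lemma~\ref{lemma:2a}, but it transfers the separation from $D_*$ to $D_k$ metrically: it notes that $D_*\cap O'$ and $D_*\cap O''$ are closed and at positive $d_\infty$-distance from $(O')^c$ and $(O'')^c$ respectively, and then uses $\lim_{k\to\infty} d_H(D_k,D_*)=0$ to conclude that $O',O''$ disconnect $D_k$ for all large $k$, contradicting Lemma~\ref{lemma:2a} directly. You instead run the standard compactness argument for nested intersections: the sets $D_k\cap K$, with $K=([0,1]\times X)\setminus(O'\cup O'')$, form a decreasing sequence of compact sets with empty intersection, so some $D_k\subseteq O'\cup O''$; connectedness of that $D_k$ then forces $D_k$ (hence $D_*\subseteq D_k$) into a single piece, contradicting $D_*\not\subseteq O'$ and $D_*\not\subseteq O''$. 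Your route avoids the Hausdorff-distance machinery altogether (indeed it would work in any compact Hausdorff setting, since the paper's definition of connectedness already hands you the disjoint open sets, so no normality argument is needed), at the cost of not reusing the convergence fact $d_H(D_k,D_*)\to 0$ that the paper has already set up and uses again afterwards. One cosmetic remark: the nonemptiness of $D_*\cap O'$ and $D_*\cap O''$ follows from conditions (a)--(c) alone rather than from $D_*\neq\emptyset$, and in fact your argument never uses it, since the final contradiction is with $D_*\not\subseteq O'$ (or $D_*\not\subseteq O''$).
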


\begin{proof}
Assume by way of contradiction that $D_*$ is not connected,
and let $O'$ and $O''$ be two disjoint open sets that satisfy
(a) $D_* \subseteq O' \cup O''$,
(b) $D_* \not\subseteq O'$, and
(c) $D_* \not\subseteq O''$.

We have $D_* \cap O' = D_* \cap (O'')^c$,
hence $D_* \cap O'$ is closed, and it is disjoint of the closed set $(O')^c$.
It follows that $d_\infty(D_* \cap O',(O')^c) > 0$.
Similarly, $d_\infty(D_* \cap O'',(O'')^c) > 0$.

Since $\lim_{k \to \infty} d_H(D_k,D_*) = 0$,
It follows that the sets $O'$ and $O''$ satisfy (a)--(c) w.r.t.~$D_k$ (instead of w.r.t.~$D_*$),
for every $k$ sufficiently large.
This implies that for every such $k$, the set $D_k$ is not connected,
contradicting Lemma~\ref{lemma:2a}.
\end{proof}

\bigskip

For every $k \in \dN$, the set $D_k$ has a connected component whose projection on the first coordinate is $[0,1]$.
In particular, for every $t \in [0,1]$ there is $x_{k,t} \in X$ such that $(t,x_{k,t})\in D_k$.
Since $X$ is compact, the sequence $(x_{k,t})_{k \in \dN}$ has a converging subsequence.
Since $\lim_{k \to \infty} d_H(D_K,D_*) = 0$, it follows that there is $x_t \in X$ such that $(t,x_t) \in D_*$.
Therefore, the projection of $D_*$ on the first coordinate is $[0,1]$.
It follows that the connected component of $C_f$ that contains $D_*$
satisfies the property that its
projection on the first coordinate is $[0,1]$,
contradicting the assumption in the proposition.

\subsection{Proof of Proposition~\ref{prop:2}}

We start by defining a continuous function $g : ([0,1] \times X) \to [-1,1]$ that satisfies the following properties (see Figure~\arabic{figurecounter},
where $C_f$ has six connected components):
\begin{itemize}
\item   $g \equiv 1$ on $B_1 := (\{0\} \times X) \cup (C_f \cap O_0)$.
\item   $g \equiv -1$ on $B_{-1} := (\{1\} \times X) \cup (C_f \cap O_1)$.
\end{itemize}

\bigskip
\centerline{ \includegraphics{browder_fig.2} \ \ \ \ \ \ \ \ \ \ \includegraphics{browder_fig.3}}

\vspace{0.2truecm}

\centerline{{Figure
\arabic{figurecounter}:
The sets $C_f$ (dark), $O_0$ and $O_1$ (grey) in Part A;}}
\centerline{{the sets $B_1$, $B_{-1}$ (dark) in Part B.}}
\addtocounter{figurecounter}{1}

\bigskip

The sets
$B_1$
and
$B_{-1}$
are disjoint.
As in the proof of Lemma~\ref{lemma:2},
$C_f\cap O_0 = C_f \cap (O_1)^c$ is closed, 
and similarly, $C_f \cap O_1$ is closed.
It follows that
$B_1$
and
$B_{-1}$
and closed, and hence by Titze's Extension Theorem such a function $g$ exists,
for example,
\[ g(t,x) := \left\{
\begin{array}{lll}
1, & \ \ \ \ \ & (t,x) \in B_1,\\
-1, & \ \ \ \ \ & (t,x) \in B_{-1},\\
\frac{d_\infty\bigl((t,x),B_{-1}\bigr) - d_\infty\bigl((t,x),B_1\bigr)}{d_\infty\bigl((t,x),B_{-1}\bigr) + d_\infty\bigl((t,x),B_1\bigr)},
& & \hbox{Otherwise}.
\end{array}
\right. \]

We argue that for every $\ep > 0$ sufficiently small,
$t + \ep g(t,x) \geq 0$ for every $(t,x) \in [0,1] \times X$.
Indeed, since $g$ is continuous over the compact set $[0,1] \times X$,
it is absolutely continuous.
Hence, and since $g \equiv 1$ on $\{0\} \times X$,
it follows that there is $\ep > 0$ such that $g(t,x) \geq 0$
whenever $t \leq \ep$.
This implies that if $t \leq \ep$ then
\[ t + \ep g(t,x) \geq t \geq 0, \]
and if $t \geq \ep$, then since $g(t,x) \geq -1$ we have
\[ t + \ep g(t,x) \geq t - \ep \geq 0. \]
Analogously, for every $\ep > 0$ sufficiently small
$t + \ep g(t,x) \leq 1$ for every $(t,x) \in [0,1] \times X$.

Let $\ep > 0$ be sufficiently small so that $t + \ep g(t,x) \in [0,1]$ for every $(t,x) \in [0,1] \times X$.
Consider the function $F : ([0,1] \times X) \to ([0,1] \times X)$ defined by
\[ F(t,x) := \bigl(t + \ep g(t,x), f(t,x)\bigr), \ \ \ \forall (t,x) \in [0,1] \times X. \]
The function $F$ is continuous, and by the choice of $\ep$ its range is indeed $[0,1] \times X$.
We argue that $F$ has no fixed point.
Indeed, if $(t^*,x^*)$ is a fixed point of $F$,
then
\[ t^* = t^* + \ep g(t^*,x^*), \ \ \ x^* =  f(t^*,x^*). \]
This implies that $g(t^*,x^*) = 0$.
Since $g$ attains the values 1 and $-1$ on $C_f$, it follows that $(t^*,x^*) \not\in C_f$.
On the other hand, since $x^* =  f(t^*,x^*)$, we have $(t^*,x^*) \in C_f$,
a contradiction.

\section{Extensions}
\label{section:discussion}

We proved Theorem~\ref{theorem:browder} when $X = [0,1]^n$.
Our proof holds whenever $X$ is a nonempty, convex, and compact subset
of a locally convex metrizable topological vector space.
The only part of the proof that needs to be adapted for this extension
is the definition of $\calT_k$.
Since $[0,1] \times X$ is compact and metrizable,
for every $k \in \dN$ there is a finite collection $(T_{k,l})_{l=1}^{L_k}$ of open sets
with diamater smaller than $\frac{1}{k}$ that covers $X$.
We can assume furthermore that each open set $T_{k+1,l}$ is a subset of
one of the sets $(T_{k,l})_{l=1}^{L_k}$.
We then define $\calT_k$ to be the collection of closures of $(T_{k,l})_{l=1}^{L_k}$,
for each $k \in \dN$.

We note that Browder's (1960) proof using the fixed point index 
implies Theorem~\ref{theorem:browder} when $X$ is a nonempty, convex, and compact subset
of a locally convex topological vector space (but not necessarily metrizable).

\bigskip

In Theorem~\ref{theorem:browder},
the parameter set is $[0,1]$.
One may wonder whether the theorem remains valid for more general parameter sets.
The answer is positive.
We here illustrate this extension for the parameter set $[0,1]^2$.

Let $f : ([0,1]^2 \times X) \to X$ be a continuous function,
where $X = [0,1]^n$,
and let $\varphi : [0,1] \to [0,1]^2$ be a continuous and surjective function
(e.g., the Peano's curve (Peano, 1890)).
The function $h := f \circ (\varphi,{\mathrm Id}_X) : [0,1] \times X \to X$ is a composition of two continuous functions, hence continuous, and by Theorem~\ref{theorem:browder} the set $C_h$ has a connected component, denoted $B$, whose projection on the first coordinate is $[0,1]$.
But then the set $\{(\varphi(t),x) \colon (t,x) \in B\}$ is a connected component of $C_f$
whose projection on the first coordinate is $[0,1]^2$.

Note that this construction is valid whenever the parameter set $Y$ possesses a space-filling curve,
namely, there is a continuous and surjective function $\varphi : [0,1] \to Y$.
Recall that the Hahn-Mazurkiewicz Theorem (e.g., Willard, 2012, Theorem~31.5) states that a space possesses a space-filling curve
if and only if it is compact, connected, locally connected, and second-countable.
One example of a set that does not possess a space-filling curve is the set $C_f$ in Example~\ref{example:1}.

\end{document}